\theoremstyle{definition}
\newtheorem{thm}{Theorem}
\newtheorem{prop}[thm]{Proposition}
\theoremstyle{definition}
\newtheorem{defn}{Definition}
\theoremstyle{definition}
\newtheorem{rem}{Remark}
\theoremstyle{definition}
\theoremstyle{definition}
\theoremstyle{definition}
\theoremstyle{definition}
\newcommand{\qqed}{\hfill $\square$}
\begin{document}

\thispagestyle{empty}

\begin{center}
{\huge  \textbf{Graph Homomorphisms Based On Particular Total Colorings of Graphs  and Graphic Lattices}}

\vskip 0.4cm

{\Large Bing \textsc{Yao}$^{1}$,~Hongyu \textsc{Wang}$^{2,\dagger}$}\\

\vskip 0.4cm

{\large 1. College of Mathematics and Statistics, Northwest Normal University, Lanzhou, 730070 CHINA\\
2. School of Electronics Engineering and Computer Science, Peking University, Beijing, 100871, CHINA\\
$^{\dag}$ Corresponding author Hongyu Wang's email: why5126@pku.edu.cn}
\end{center}

\vskip 1.2cm

\begin{abstract}
Lattice-based cryptography is not only for thwarting future quantum computers, and is also the basis of Fully Homomorphic Encryption. Motivated from the advantage of graph homomorphisms we combine graph homomorphisms with graph total colorings together for designing new types of graph homomorphisms: totally-colored graph homomorphisms, graphic-lattice homomorphisms from sets to sets, every-zero graphic group homomorphisms from sets to sets. Our graph-homomorphism lattices are made up by graph homomorphisms. These new homomorphisms induce some problems of graph theory, for example, Number String Decomposition and Graph Homomorphism Problem.

\textbf{Key words:} Graph homomorphism; graphic lattice; total coloring; isomorphism; lattice-based cryptography; topological coding.
\end{abstract}

\section{Introduction and preliminary}

A new security method built on an underlying architecture known as \emph{lattice-based cryptography} hides data inside complex mathematical problems. Lattice-based cryptography is not only for thwarting future quantum computers, and is also the basis of another encryption technology, called \emph{Fully Homomorphic Encryption}, which could make it possible to perform calculations on a file without ever seeing sensitive data or exposing it to hackers (Ref. \cite{Gena-Hahn-Claude-Tardif-1997, Pavol-Hell-Cambridge-2003, Alexander-Engstrom-Patrik-Noren-2011}).

\subsection{Graph homomorphisms in Homomorphic Encryption}

Homomorphisms provide a way of simplifying the structure of objects one wishes to study while preserving much of it that is of significance. It is not surprising that homomorphisms also appeared in graph theory, and that they have proven useful in many areas (Ref. \cite{Gena-Hahn-Claude-Tardif-1997, Alexander-Engstrom-Patrik-Noren-2011}). Graph homomorphisms have a great deal of applications in graph theory, computer science and other fields. The connection between locally constrained graph homomorphisms and degree matrices arising from an equitable partition of a graph have been explored in \cite{Fiala-Paulusma-Telle-2008}.

A main computational issue is: for every graph $H$ classifying the decision problem whether an input graph $G$ has a homomorphism of given type to the fixed graph $H$ as either NP-complete or polynomially solvable (Ref. \cite{Pavol-Hell-Cambridge-2003}). The comprehensive survey by Zhu \cite{Zhu-X-Discrete-Math2001} contains many other intriguing problems about graph homomorphism.

We, in this article, try to provide some design of graph homomorphisms which are based on topological structures and graph colorings. Topsnut-gpws (Ref. \cite{Wang-Xu-Yao-2016, Wang-Xu-Yao-Key-models-Lock-models-2016}) will play main roles in our homomorphisms (six colored graphs (a)-(f) shown in Fig.\ref{fig:example-00} are the Topsnut-gpws), because Topsnut-gpws are made up of two kinds of mathematical objects: topological structure and algebraic relation, such that attacker switch back and forth in two different languages, and are unable to convey useful information.

\subsection{Definition for graph homomorphisms}

We will use the standard notation and terminology of graph theory in this paper. Graphs will be simple, loopless and finite. A $(p,q)$-graph is a graph having $p$ vertices and $q$ edges. The \emph{cardinality} of a set $X$ is denoted as $|X|$, so the \emph{degree} of a vertex $x$ in a $(p,q)$-graph $G$ is written as $\textrm{deg}_G(x)=|N(x)|$, where $N(x)$ is the set of neighbors of the vertex $x$. A vertex $y$ is called a \emph{leaf} if $\textrm{deg}_G(y)=1$. A symbol $[a,b]$ stands for an integer set $\{a,a+1,\dots, b\}$ with two integers $a,b$ subject to $a<b$. All non-negative integers are collected in the set $Z^0$. A graph $G$ admits a \emph{labelling} $f:V(G)\rightarrow [a,b]$ means that $f(x)\neq f(y)$ for any pair of distinct vertices $x,y\in V(G)$ and, admits a \emph{coloring} $g:V(G)\rightarrow [a,b]$ means that $g(u)= g(v)$ for some two distinct vertices $u,v\in V(G)$. For a mapping $f:S\subset V(G)\cup E(G)\rightarrow [1,M]$, write color set by $f(S)=\{f(w):w\in S\}$. The definition of a graph homomorphism is shown as follows:

\begin{defn}\label{defn:definition-graph-homomorphism}
\cite{Bondy-2008} A \emph{graph homomorphism} $G\rightarrow H$ from a graph $G$ into another graph $H$ is a mapping $f: V(G) \rightarrow V(H)$ such that $f(u)f(v)\in E(H)$ for each edge $uv \in E(G)$. (see examples shown in Fig.\ref{fig:1-homomorphism}.)\qqed
\end{defn}

\begin{figure}[h]
\centering
\includegraphics[width=8cm]{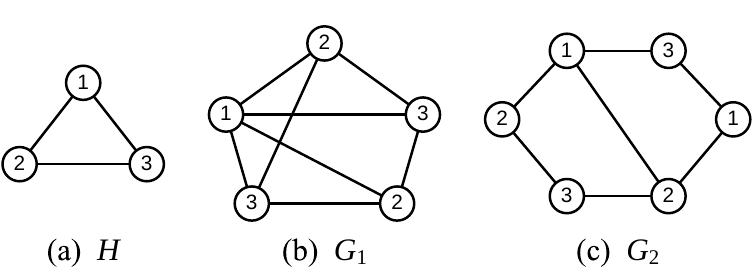}
\caption{\label{fig:1-homomorphism}{\small Two graph homomorphisms $G_i\rightarrow H$ based on $\theta_i: G_i\rightarrow H$ for $i=1,2$.}}
\end{figure}

\begin{rem} \label{rem:graph-homomorphism-11}
By \cite{Pavol-Hell-Cambridge-2003}, we have the following concepts:
\begin{asparaenum}[(a) ]
\item A homomorphism from a graph $G$ to itself is called an \emph{endomorphism}. An isomorphism from $G$ to $H$ is a particularly graph homomorphism from $G$ to $H$, also, they are \emph{homomorphically equivalent}.
\item Two graphs are homomorphically equivalent if each admits a homomorphism to the other, denoted as $G\leftrightarrow H$ which contains a homomorphism $G\rightarrow H$ from $G$ to $H$, and another homomorphism $H\rightarrow G$ from $H$ to $G$.
\item A homomorphism to the complete graph $K_n$ is exactly an $n$-coloring, so a homomorphism of $G$ to $H$ is also called an \emph{$H$-coloring} of $G$. The \textbf{homomorphism problem} for a fixed graph $H$, also called the $H$-coloring problem, asks whether or not an input graph $G$ admits a homomorphism to $H$.
\item By analogy with classical colorings, we associate with each $H$-coloring $f$ of $G$ a partition of $V (G)$ into the sets $S_h = f^{-1}(h)$, $h \in V (H)$. It is clear that a mapping $f: V (G) \rightarrow V (H)$ is a homomorphism of $G$ to $H$ if and only if the associated partition satisfies the following two constraints:

\quad (a-1) if $hh$ is not a loop in $H$, then the set $S_h$ is independent in $G$; and

\quad (a-2) if $hh'$ is not an edge (arc) of $H$, then there are no edges (arcs) from the set $S_h$ to the set $S_{h'}$ in $G$.

\quad Thus for a graph $G$ to admit an \emph{$H$-coloring is equivalent} to admitting a partition satisfying (a-1) and (a-2).
\item If $H$,$H'$ are homomorphically equivalent, then a graph $G$ is $H$-colorable if and only if it is $H'$-colorable.
\item Suppose that $H$ is a subgraph of $G$. We say that $G$ retracts to $H$, if there exists a homomorphism $f:G\rightarrow H$, called a \emph{retraction}, such that $f(u)=u$ for any vertex of $H$. A \emph{core} is a graph which does not retract to a proper subgraph. Any graph is homomorphically equivalent to a core.\qqed
\end{asparaenum}
\end{rem}

\section{Results on graph homomorphisms}

\subsection{Graph homomorphisms of uncolored graphs}

By Definition \ref{defn:definition-graph-homomorphism}, we have a result as follows:

\begin{prop}\label{thm:observation}
Suppose that $\varphi: G \rightarrow H$ is a graph homomorphism. Then $\varphi$ is an
isomorphism if and only if $\varphi$ is bijective and also a homomorphism. In particular,
if $G = H$ then $\varphi$ is an automorphism if and only if it is bijective.
\end{prop}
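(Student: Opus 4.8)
The plan is to prove the two implications of the biconditional separately and then read off the ``in particular'' clause as the special case $G=H$.

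The forward direction is immediate from the definitions recalled in Remark \ref{rem:graph-homomorphism-11}: by definition an isomorphism $\varphi:G\to H$ is a bijective homomorphism whose inverse map $\varphi^{-1}:V(H)\to V(G)$ is again a homomorphism, so in particular $\varphi$ is bijective and is a homomorphism, which is all the statement asks in this direction. The substance of the proposition lies in the converse. So assume $\varphi$ is a bijective homomorphism; I must show $\varphi^{-1}$ is a homomorphism, i.e. that $\varphi^{-1}(x)\varphi^{-1}(y)\in E(G)$ for every edge $xy\in E(H)$. The key step is to consider the map on edges $uv\mapsto \varphi(u)\varphi(v)$ induced by $\varphi$: it sends $E(G)$ into $E(H)$ because $\varphi$ is a homomorphism, and it is injective because, if $uv$ and $u'v'$ are distinct edges of $G$, then $\{u,v\}\neq\{u',v'\}$ and the injectivity of $\varphi$ on $V(G)$ forces $\{\varphi(u),\varphi(v)\}\neq\{\varphi(u'),\varphi(v')\}$. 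Since $G$ and $H$ are finite and $\varphi$ is bijective, $|E(G)|=|E(H)|$, so an injective map between finite sets of the same size is a bijection; hence the induced edge map is onto $E(H)$. Consequently every edge $xy\in E(H)$ equals $\varphi(u)\varphi(v)$ for some $uv\in E(G)$, i.e. $\varphi^{-1}(x)\varphi^{-1}(y)=uv\in E(G)$, so $\varphi^{-1}$ is a homomorphism and $\varphi$ is an isomorphism.

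For the ``in particular'' part, take $G=H$, so that $\varphi$ is an endomorphism. An automorphism is by definition a bijective endomorphism whose inverse is an endomorphism, so if $\varphi$ is an automorphism it is bijective; conversely, if $\varphi$ is a bijective endomorphism then $|E(G)|=|E(H)|$ holds trivially and the argument of the previous paragraph shows $\varphi^{-1}$ is again a homomorphism $G\to G$, i.e. $\varphi$ is an automorphism.

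The only point that needs care is the equality $|E(G)|=|E(H)|$ used to upgrade the injective induced edge map to a surjective one; this is where the finiteness of the graphs enters, and where one should read ``bijective'' as a bijection of the whole graph (equivalently, a vertex bijection preserving the number of edges) rather than merely a bijection on vertex sets, since a vertex bijection that is a homomorphism need not be an isomorphism when $|E(G)|<|E(H)|$ (e.g. the natural map $P_4\to C_4$). Everything else in the proof is a routine unwinding of the definitions of homomorphism, isomorphism and automorphism.
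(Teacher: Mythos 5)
Your write-up contains a genuine gap, and to your credit you flag it yourself in the last paragraph: the step asserting $|E(G)|=|E(H)|$ does not follow from bijectivity of $\varphi$ on the vertex sets. A vertex bijection that is a homomorphism can strictly increase the number of edges, as in your own example $P_4\rightarrow C_4$; there the induced edge map is injective but not surjective, $\varphi^{-1}$ is not a homomorphism, and $\varphi$ is not an isomorphism. So the ``if'' direction of Proposition~\ref{thm:observation}, read literally with the paper's Definition~\ref{defn:definition-graph-homomorphism} (edges map to edges, with no converse requirement), is simply false, and your proposed repair --- reading ``bijective'' as a bijection of the whole graph, i.e.\ a vertex bijection whose induced edge map is also onto $E(H)$ --- is not an innocent rephrasing. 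For a bijective homomorphism of finite graphs, the condition $|E(G)|=|E(H)|$ is equivalent to fullness (equivalently, faithfulness) in the sense of Definition~\ref{defn:faithful}, so what your argument actually establishes is Theorem~\ref{thm:bijective-graph-homomorphism}, not the proposition as stated. The rest of your argument is fine: the forward direction, the injectivity of the induced edge map, and the specialization to $G=H$ all unwind correctly once the edge count is available.

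For comparison with the source: the paper gives no proof of this proposition at all --- it is asserted to follow directly from Definition~\ref{defn:definition-graph-homomorphism} --- so there is no argument in the text that confronts the difficulty you ran into, and the paper's own Theorem~\ref{thm:bijective-graph-homomorphism} (which adds the faithfulness hypothesis) is an implicit acknowledgment that bijectivity alone does not suffice. The honest disposition of the converse is therefore either to add fullness/faithfulness as an explicit hypothesis, at which point your edge-counting argument goes through with $|E(G)|=|E(H)|$ supplied by hypothesis rather than by fiat, or to observe that under the convention in which ``isomorphism'' just means ``bijective homomorphism'' the proposition is a tautology with nothing to prove.
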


\begin{defn}\label{defn:faithful}
\cite{Gena-Hahn-Claude-Tardif-1997} A graph homomorphism $\varphi: G \rightarrow H$ is called \emph{faithful} if $\varphi(G)$ is an induced subgraph of $H$, and called \emph{full} if $uv\in E(G)$ if and only if $\varphi(u)\varphi(v) \in E(H)$.
\end{defn}

A graph homomorphism $\varphi: G \rightarrow H$ is faithful when there is an edge between any two pre-images (inverse image) $\varphi^{-1}(x)$ and $\varphi^{-1}(y)$ such that $xy$ is an edge of $H$, $\varphi^{-1}(x)\cup \varphi^{-1}(y)$ induces a complete bipartite graph whenever $xy\in E(H)$. Moreover, $\varphi^{-1}(u)\varphi^{-1}(v)$ is an edge in $G$ if and only if $uv$ is an edge in $H$, thus
\begin{thm}\label{thm:bijective-graph-homomorphism}
\cite{Gena-Hahn-Claude-Tardif-1997} A faithful bijective graph homomorphism is an isomorphism, that is $G \cong H$.
\end{thm}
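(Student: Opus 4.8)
The plan is to verify directly that the vertex map $\varphi:V(G)\to V(H)$ underlying the homomorphism satisfies the two defining properties of a graph isomorphism: it is a bijection of vertex sets, which is assumed, and it preserves both edges and non-edges, i.e. $uv\in E(G)$ if and only if $\varphi(u)\varphi(v)\in E(H)$. The implication $uv\in E(G)\Rightarrow \varphi(u)\varphi(v)\in E(H)$ is immediate from Definition \ref{defn:definition-graph-homomorphism}, so all the work is in the converse, and that is exactly where faithfulness enters.

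First I would use bijectivity to pin down the image graph. Since $\varphi$ is a bijection, $\varphi(V(G))=V(H)$; moreover the induced map on edges $uv\mapsto \varphi(u)\varphi(v)$ is injective, because $\{\varphi(u),\varphi(v)\}=\{\varphi(a),\varphi(b)\}$ forces $\{u,v\}=\{a,b\}$ by injectivity of $\varphi$. Hence $\varphi(G)$ is a genuine simple subgraph of $H$ with $V(\varphi(G))=V(H)$ and $E(\varphi(G))=\{\varphi(u)\varphi(v):uv\in E(G)\}\subseteq E(H)$. Now I bring in Definition \ref{defn:faithful}: faithfulness says $\varphi(G)$ is an induced subgraph of $H$, and an induced subgraph of $H$ spanning all of $V(H)$ can only be $H$ itself; thus $E(\varphi(G))=E(H)$. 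Consequently, if $\varphi(u)\varphi(v)\in E(H)=E(\varphi(G))$, then $\varphi(u)\varphi(v)=\varphi(a)\varphi(b)$ for some $ab\in E(G)$, whence $\{u,v\}=\{a,b\}$ and $uv\in E(G)$. This establishes the converse implication, so $\varphi$ preserves edges and non-edges; combined with bijectivity (cf. Proposition \ref{thm:observation}), $\varphi$ is an isomorphism and $G\cong H$.

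I do not anticipate a real obstacle; the one thing to be careful about is to make essential use of the faithfulness hypothesis, since a bijective homomorphism by itself need not be an isomorphism --- for example, the identity vertex map from the edgeless graph on a vertex set $V$ to any graph on $V$ having at least one edge is a bijective homomorphism that is not an isomorphism, and it is not faithful. So the exposition should highlight that faithfulness is precisely what upgrades ``the image $\varphi(G)$ sits inside $H$'' to ``$\varphi(G)=H$'' once the vertex map is onto, which then yields equality of edge sets and hence an isomorphism.
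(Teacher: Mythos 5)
Your proof is correct and follows essentially the same route as the paper, which justifies the theorem by the preceding remark that faithfulness forces $\varphi(u)\varphi(v)\in E(H)$ to imply $uv\in E(G)$ once $\varphi$ is bijective. Your write-up is in fact more careful than the paper's one-sentence argument: making explicit that a spanning induced subgraph of $H$ must equal $H$, and using injectivity of $\varphi$ to pull the edge back uniquely, is exactly the detail the paper glosses over.
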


\begin{thm}\label{thm:sequence-graph-homomorphisms}
There are infinite graphs $G^*_n$ forming a sequence $\{G^*_n\}$, such that $G^*_n\rightarrow G^*_{n-1}$ is really a graph homomorphism for $n\geq 1$.
\end{thm}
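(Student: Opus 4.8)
The plan is to prove the statement by an explicit construction: rather than argue abstractly, I would produce a concrete infinite list of simple, loopless, finite graphs $G^*_0, G^*_1, G^*_2,\dots$ together with the required maps. The cleanest way is to isolate a single graph operation $\Phi$ with the property that, for every graph $H$, the graph $\Phi(H)$ strictly contains $H$ and admits a homomorphism $\Phi(H)\to H$; one then fixes a seed $G^*_0$ (the triangle $K_3$ will do) and sets $G^*_n=\Phi(G^*_{n-1})$ for $n\ge 1$, so that the map produced by $\Phi$ is exactly the homomorphism $G^*_n\to G^*_{n-1}$.

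For $\Phi$ I would use \emph{false-twin duplication}: given $H$, pick a vertex $u\in V(H)$, adjoin a new vertex $u'$ with $N_{\Phi(H)}(u')=N_H(u)$ and $u'u\notin E(\Phi(H))$, and change nothing else. The map $f$ on $V(\Phi(H))$ defined by $f(u')=u$ and $f(x)=x$ for every other vertex sends each edge $u'w$ (with $w\in N_H(u)$) to the edge $uw\in E(H)$ and fixes every other edge, so it is a homomorphism $\Phi(H)\to H$; in the terminology of Remark \ref{rem:graph-homomorphism-11}(f) it is in fact a retraction onto the copy of $H$ inside $\Phi(H)$. Since $|V(G^*_n)|=|V(G^*_0)|+n$, the graphs in the sequence are pairwise non-isomorphic, so $\{G^*_n\}$ really is an infinite family, and a one-line induction on $n$ completes the argument. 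I would also mention the alternative family $G^*_n=C_{2n+3}$ of odd cycles: by the classical fact that $C_{2a+1}\to C_{2b+1}$ exactly when $a\ge b$, one gets $G^*_n=C_{2n+3}\to C_{2n+1}=G^*_{n-1}$, and this version has the bonus that the chain is strictly descending in the homomorphism order (there is no homomorphism backwards).

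There is no deep obstacle here; the only thing needing attention is bookkeeping. One must keep $\Phi(H)$ simple and loopless, which is why the new vertex is taken to be a \emph{false} twin (same neighbourhood but non-adjacent to $u$) rather than a true twin, and one must be explicit about the embedded copy of $H$ inside $\Phi(H)$ so that the composites $G^*_n\to G^*_{n-1}\to\cdots\to G^*_0$ are unambiguously defined. If a stronger conclusion were wanted — e.g. that consecutive graphs are \emph{not} homomorphically equivalent — the twin construction would have to be replaced (the inclusion $H\hookrightarrow\Phi(H)$ is itself a homomorphism), and the odd-cycle family, or a similar ``rigid'' family, would be the natural substitute.
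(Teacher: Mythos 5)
Your construction is correct, and it is genuinely different from the paper's. The paper grows its sequence from a triangle by, at stage $n$, attaching a new degree-two vertex to every edge whose endpoints carry the two most recent colours $n-1$ and $n-2$, then forms $G^*_n=G_n\cup K_1$ with an isolated vertex $z_0$ and defines $\theta_n$ to fix all old vertices and send every degree-two (i.e.\ newly added) vertex to $z_0$. Your false-twin operator $\Phi$ is leaner: one new vertex per step, the map is the identity off that vertex, and every edge $u'w$ visibly lands on the edge $uw\in E(H)$, so $\Phi(H)\to H$ is a retraction in the sense of Remark (f) and the homomorphism condition of Definition 1 is checked edge by edge. That explicit check is in fact an advantage over the paper's argument: there the new vertices are sent to the \emph{isolated} vertex $z_0$ of $G^*_{n-1}$, so the images of the edges $\gamma\alpha,\gamma\beta$ would have to be edges at $z_0$, which do not exist; the paper's map as written does not satisfy Definition 1, whereas yours does. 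The pairwise non-isomorphism via $|V(G^*_n)|=|V(G^*_0)|+n$ is the right way to certify that the family is genuinely infinite (the paper does not address this), and your odd-cycle variant $C_{2n+3}\to C_{2n+1}$ is a nice complement because it yields a chain that is strictly decreasing in the homomorphism order, something the twin construction cannot give since $H\hookrightarrow\Phi(H)$ is itself a homomorphism. No gaps; the proposal stands on its own.
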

\begin{proof} First, we present an algorithm as follows: $G_0$ is a triangle $\Delta x_1x_2x_3$, we use a coloring $h$ to color the vertices of $G_0$ as $h(x_i)=0$ with $i\in [1,3]$.

Step 1: Add a new $y$ vertex for each edge $x_ix_j$ of the triangle $\Delta x_1x_2x_3$ with $i\neq j$, and join $y$ with two vertices $x_i$ and $x_j$ of the edge $x_ix_j$ by two new edges $yx_i$ and $yx_j$, the resulting graph is denoted by $G_1$, and color $y$ with $h(y)=1$.

Step 2: Add a new $w$ vertex for each edge $uv$ of $G_1$ if $h(u)=1$ and $h(v)=0$ (or $h(v)=1$ and $h(u)=0$), and join $y$ respectively with two vertices $u$ and $v$ by two new edges $wu$ and $wv$, the resulting graph is denoted by $G_2$, and color $w$ with $h(w)=2$.

Step $n$: Add a new $\gamma$ vertex for each edge $\alpha\beta$ of $G_{n-1}$ if $h(\alpha)=n-1$ and $h(\beta)=n-2$ (or $h(\alpha)=n-2$ and $h(\beta)=n-1$), and join $\gamma$ respectively with two vertices $\alpha$ and $\beta$ by two new edges $\gamma\alpha$ and $\gamma\beta$, the resulting graph is denoted by $G_n$, and color $\gamma$ with $h(\gamma)=n$.

Second, we construct a graph $G^*_n=G_n\cup K_1$ with $n\geq 0$, where $K_1$ is a complete graph of one vertex $z_0$. For each $n\geq 1$, there is a mapping $\theta_n:V(G^*_n)\rightarrow V(G^*_{n-1})$ in this way: $V(G^*_n\setminus V^n_{(2)})=V(G^*_{n-1}\setminus {V(K_1)})$, each $x\in V^n_{(2)}$ holds $\theta_n(x)=z_0$, where $V^n_{(2)}$ is the set of vertices of degree two in $G^*_n$. So $G^*_n\rightarrow G^*_{n-1}$ is really a graph homomorphism. We write this case by $\{G^*_n\}\rightarrow G^*_0$, called as a \emph{graph homomorphism sequence}.
\end{proof}

The notation $\{G^*_n\}\rightarrow G^*_0$ can be written as
\begin{equation}\label{eqa:inverse-limit}
\lim_{\infty \rightarrow 0}\{G^*\}^{\infty}_{0}=G^*_0
\end{equation} called an \emph{inverse limitation}. There are many graph homomorphism sequence $\{G^*_n\}$ holding $G^*_n\rightarrow G^*_{n-1}$, i.e., $\{G^*_n\}\rightarrow G^*_0$ in network science. For example, we can substitute the triangle $G_0$ in the proof of Theorem \ref{thm:sequence-graph-homomorphisms} by any connected graph.

\subsection{Totally-colored graph homomorphisms}

We propose a new type of graph homomorphisms by combining graph homomorphisms and graph total colorings together as follows:

\begin{defn}\label{defn:gracefully-graph-homomorphism}
Let $G\rightarrow H$ be a graph homomorphism from a $(p,q)$-graph $G$ to another $(p',q')$-graph $H$ based on a mapping $\alpha: V(G) \rightarrow V(H)$ such that $\alpha(u)\alpha(v)\in E(H)$ for each edge $uv \in E(G)$. The graph $G$ admits a total coloring $f$, the graph $H$ admits a total coloring $g$. Write $f(E(G))=\{f(uv):uv \in E(G)\}$ and $g(E(H))=\{g(\alpha(u)\alpha(v)):\alpha(u)\alpha(v)\in E(H)\}$, there are the following conditions:
\begin{asparaenum}[(\textrm{C}-1) ]
\item \label{bipartite} $V(G)=X\cup Y$, each edge $uv \in E(G)$ holds $u\in X$ and $v\in Y$ true. $V(H)=W\cup Z$, each edge $\alpha(u)\alpha(v)\in E(G)$ holds $\alpha(u)\in W$ and $\alpha(v)\in Z$ true.
\item \label{edge-difference} $f(uv)=|f(u)-f(v)|$ for each $uv \in E(G)$, $g(\alpha(u)\alpha(v))=|g(\alpha(u))-g(\alpha(v))|$ for each $\alpha(u)\alpha(v)\in E(H)$.
\item \label{edge-homomorphism} $f(uv)=g(\alpha(u)\alpha(v))$ for each $uv \in E(G)$.
\item \label{vertex-color-set} $f(x)\in [1,q+1]$ for $x\in V(G)$, $g(y)\in [1,q'+1]$ with $y\in V(H)$.
\item \label{odd-vertex-color-set} $f(x)\in [1,2q+2]$ for $x\in V(G)$, $g(y)\in [1,2q'+2]$ with $y\in V(H)$.
\item \label{grace-color-set} $[1,q]=f(E(G))=g(E(H))=[1,q']$.
\item \label{odd-grace-color-set} $[1,2q-1]=f(E(G))=g(E(H))=[1,2q'-1]$.
\item \label{set-ordered} Set-ordered property: $\max f(X)<\min f(Y)$ and $\max g(W)<\min g(Z)$.
\end{asparaenum}

We say $G\rightarrow H$ to be: (i) a \emph{bipartitely graph homomorphism} if (C-\ref{bipartite}) holds true; (ii) a \emph{gracefully graph homomorphism} if (C-\ref{edge-difference}), (C-\ref{edge-homomorphism}), (C-\ref{vertex-color-set}) and (C-\ref{grace-color-set}) hold true; (ii) a \emph{set-ordered gracefully graph homomorphism} if (C-\ref{edge-difference}), (C-\ref{edge-homomorphism}), (C-\ref{vertex-color-set}), (C-\ref{grace-color-set}) and (C-\ref{set-ordered}) hold true; (iv) an \emph{odd-gracefully graph homomorphism} if (C-\ref{edge-difference}), (C-\ref{edge-homomorphism}), (C-\ref{odd-vertex-color-set}) and (C-\ref{odd-grace-color-set}) hold true; (v) a \emph{set-ordered odd-gracefully graph homomorphism} if (C-\ref{edge-difference}), (C-\ref{edge-homomorphism}), (C-\ref{odd-vertex-color-set}), (C-\ref{odd-grace-color-set}) and (C-\ref{set-ordered}) hold true.\qqed
\end{defn}

There are five graph homomorphisms $G_{(k)}\rightarrow G_{(\textrm{a})}$ for $k=$b,c,d,e,f in Fig.\ref{fig:example-00}, each $G_{(k)}\rightarrow G_{(\textrm{a})}$ is a set-ordered gracefully graph homomorphism. However, any two of six uncolored graphs $G_{(m)}$ with $m=$aa,bb,cc,dd,ee,ff are not isomorphic from each other.

\begin{figure}[h]
\centering
\includegraphics[width=8cm]{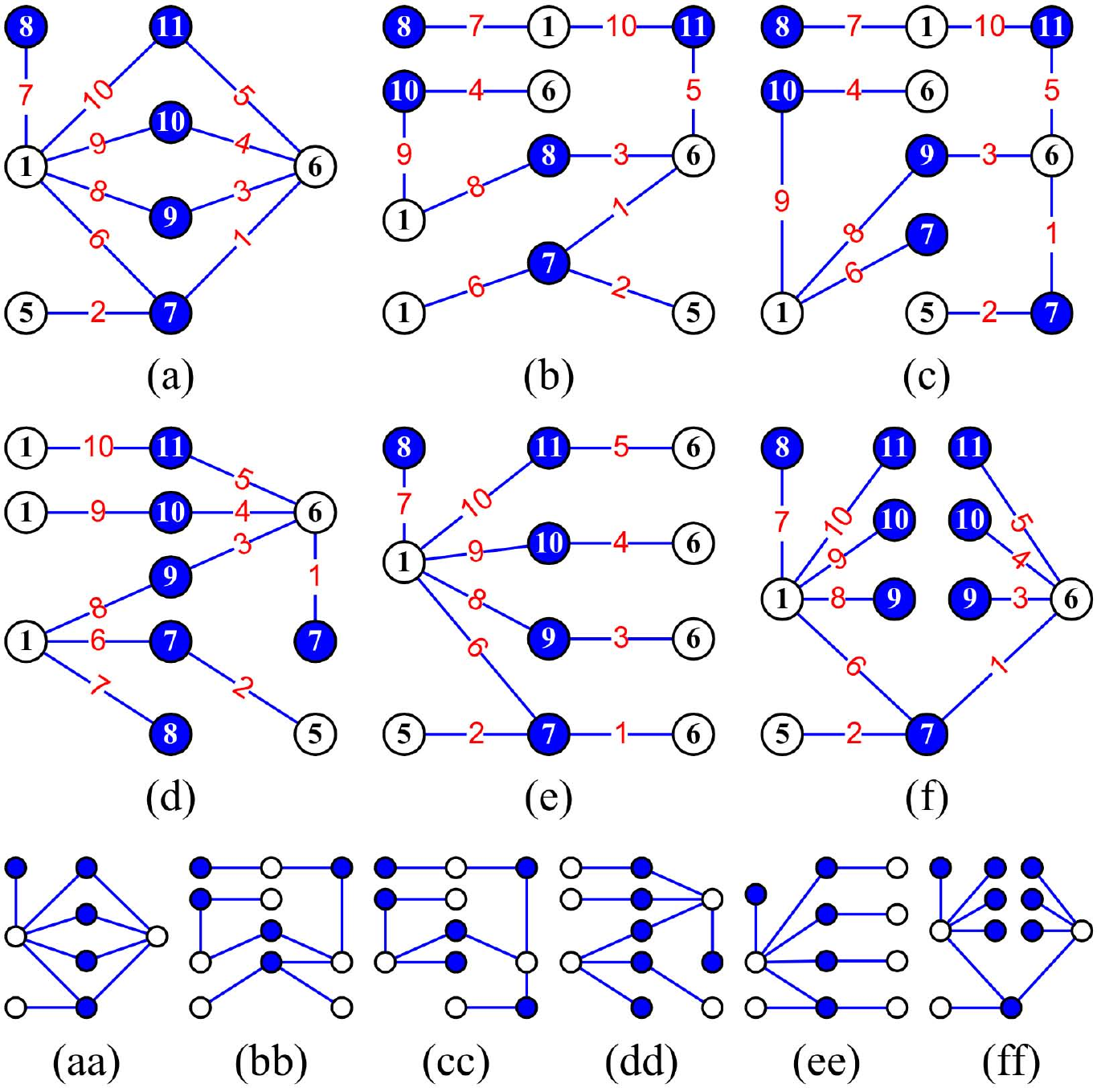}
\caption{\label{fig:example-00}{\small Five set-ordered gracefully graph homomorphisms $G_{(k)}\rightarrow G_{(\textrm{a})}$ for $k=$b,c,d,e,f; and six uncolored graphs $G_{(m)}$ with $m=$aa,bb,cc,dd,ee,ff.}}
\end{figure}

\begin{rem} \label{rem:gracefully-graph-homomorphism}
\begin{asparaenum}[(i) ]
There are the following issues about Definition \ref{defn:gracefully-graph-homomorphism}:
\item A gracefully graph homomorphism $G\rightarrow H$ holds $|f(E(G))|=|g(E(H))|$ with $q=q'$ and $|f(V(G))|\geq |g(V(H))|$, in general. The graph $G$ admits a \emph{set-ordered graceful total coloring} $f$ and the graph $H$ admits a \emph{set-ordered gracefully total coloring} $g$ by the definitions of topological coding. We call the totally-colored graph homomorphisms defined in Definition \ref{defn:gracefully-graph-homomorphism} as \emph{$W$-type totally-colored graph homomorphisms}, where a ``$W$-type totally-colored graph homomorphism'' is one of the totally-colored graph homomorphisms; and we say that the graph $G$ admits a $W$-type totally-colored graph homomorphism to $H$ in a $W$-type totally-colored graph homomorphism $G\rightarrow H$.
\item If $T\rightarrow H$ is a $W$-type totally-colored graph homomorphism, and so is $H\rightarrow T$, we say $T$ and $H$ are \emph{homomorphically equivalent} from each other, denoted as $T\leftrightarrow H$. If two graphs $G$ admitting a coloring $f$ and $H$ admitting a coloring $g$ hold $f(x)=g(\varphi(x))$ and $G\cong H$, we write this case by $G=H$ in the following discussion.\qqed
\end{asparaenum}
\end{rem}

We can see three gracefully graph homomorphisms $\varphi_i:V(H_i)\rightarrow V(H_{ii})$ for $i=1,2,3$ in Fig.\ref{fig:gracefully-homomorphism-1}. However, $H_i\neq H_j$ since their colorings $g_i(x)\neq g_j(x)$ with $1\leq i,j\leq 3$, although $H_i\cong H_j$; and moreover $H_{ii}\neq H_{jj}$ because of their colorings $h_{ii}(x)\neq h_{jj}(x)$ with $1\leq i,j\leq 3$. By the way, we have three graph homomorphisms $\theta_i:V(H_{ii})\rightarrow V(H)$ for $i=1,2,3$. Observe the inverse $\theta^{-1}_1$ of the graph homomorphism $\theta_1$, the vertex $x_{11}\in V(H_{11})$ produces a set $\theta^{-1}_1(x_{11})=\{x_{1},x_{2}\}\subset V(H_1)$, and the vertex $y_{11}\in V(H_{11})$ produces a set $\theta^{-1}_1(y_{11})=\{y_{1},y_{2}\}\subset V(H_1)$. Thereby, the graph homomorphism $\theta_1$ is not full (bijective), so are two graph homomorphisms $\theta_2,\theta_3$.

\begin{figure}[h]
\centering
\includegraphics[width=8cm]{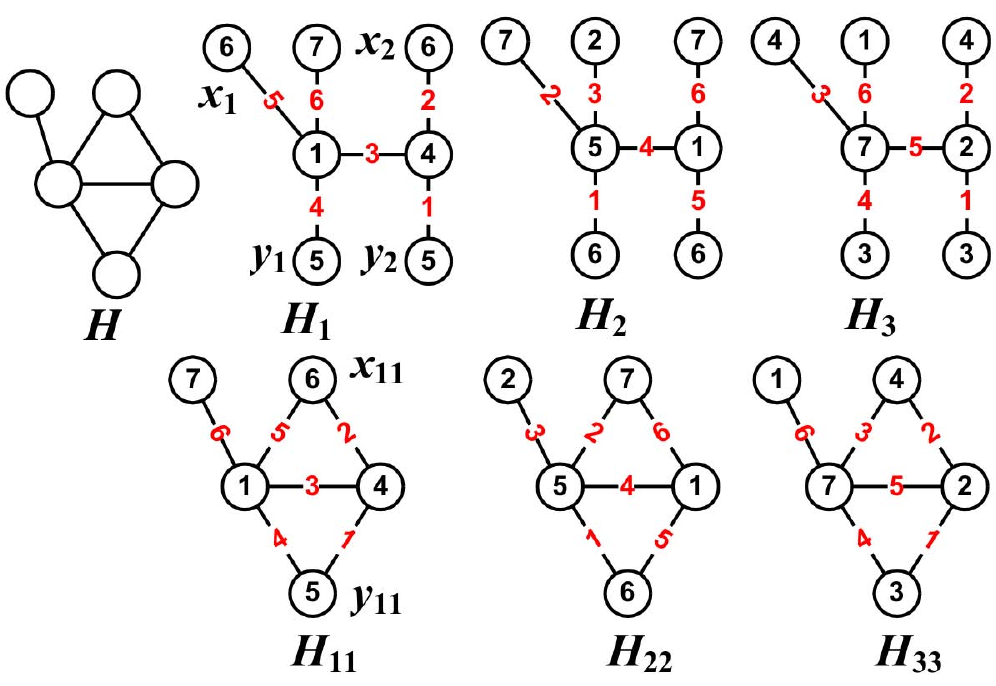}
\caption{\label{fig:gracefully-homomorphism-1}{\small For $i\neq j$, $H_i\neq H_j$ although $H_i\cong H_j$, and $H_{ii}\neq H_{jj}$ although $H_{ii}\cong H_{jj}$.}}
\end{figure}

In Fig.\ref{fig:gracefully-homomorphism}, we can see $G_i\cong G_j$ for $i\neq j$, however, $G_i\neq G_j$ since their colorings $f_i(x)\neq f_j(x)$ with $1\leq i,j\leq 4$, although $G_i\cong G_j$. Suppose that there is a graph $G^*_1$ such that we have a set-ordered gracefully graph homomorphism $\pi:V(G_1)\rightarrow V(G^*_1)$, then $G_1$ admits a set-ordered graceful total coloring $f_1$ and $G^*_1$ admits a set-ordered gracefully total coloring $g_1$. So, the vertex color sets $f_1(V(G_1))=[1,7]=g_1(V(G^*_1))$, and the edge color sets $f_1(E(G_1))=[1,6]=g_1(E(G^*_1))$. The inverse $\pi^{-1}(x)$ for $x\in V(G^*_1)$ corresponds a vertex $x'\in V(G_1)$ but a subset of $V(G_1)$, which means $\pi^{-1}(x)\pi^{-1}(y)$ is an edge of $G_1$ if and only if $xy$ is an edge of $G^*_1$, immediately, $G_1 \cong G^*_1$, and $\pi$ is bijective. Since $|f_1(u)-f_1(v)|=|g_1(\pi(u))-g_1(\pi(v))|$ by Definition \ref{defn:gracefully-graph-homomorphism}, and $uv\leftrightarrow \pi(u)\pi(v)$ is unique, we claim that $G_1=G^*_1$.

\begin{thm}\label{thm:parti-gracecular-bijective}
If a (set-ordered) gracefully graph homomorphism $\varphi: G \rightarrow H$ defined in Definition \ref{defn:gracefully-graph-homomorphism} holds $f(V(G))=g(V(H))$ and $f(E(G))=g(E(H))$, then $G=H$.
\end{thm}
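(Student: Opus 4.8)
The plan is to first promote $\varphi$ (the map $\alpha$ of Definition~\ref{defn:gracefully-graph-homomorphism}) to an isomorphism, and then to check that it carries the coloring of $G$ to the coloring of $H$. For the first part: by (C-\ref{grace-color-set}) we have $q=q'$, and since $f$ maps the $q$-element set $E(G)$ onto $[1,q]$ it is injective on $E(G)$; likewise $g$ is injective on $E(H)$, and $f$ restricts to a labelling (injection) on $V(G)$ since $f$ is a graceful total coloring. From (C-\ref{edge-homomorphism}), $g(\varphi(u)\varphi(v))=f(uv)$ for each edge, so $\varphi$ is injective on $E(G)$ (equal images force equal $f$-values, hence equal edges), its image is a set of $q$ edges of $H$, and $g(\varphi(E(G)))=f(E(G))=[1,q]=g(E(H))$ together with injectivity of $g$ on $E(H)$ force $\varphi(E(G))=E(H)$. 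Assuming $G$, hence its homomorphic image $H$, connected, $\varphi$ is therefore onto $V(H)$; then $|V(G)|=|f(V(G))|=|g(V(H))|\le|V(H)|\le|V(G)|$, so all of these are equalities, $\varphi$ is a bijection, and $g$ too is a labelling on $V(H)$. A bijective homomorphism whose induced edge map is onto $E(H)$ is full (and so faithful), hence by Theorem~\ref{thm:bijective-graph-homomorphism} it is an isomorphism and $G\cong H$.

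For the second part I would transport $f$ across $\varphi$: put $\tilde f=f\circ\varphi^{-1}$, a graceful total coloring of $H$ with $\tilde f(V(H))=f(V(G))=g(V(H))$ and with $|\tilde f(w)-\tilde f(w')|=|g(w)-g(w')|$ for every $ww'\in E(H)$ by (C-\ref{edge-difference}) and (C-\ref{edge-homomorphism}). Now I would invoke the set-ordered structure: by (C-\ref{bipartite}) and (C-\ref{set-ordered}), $V(H)=W\cup Z=\varphi(X)\cup\varphi(Y)$ with every edge running from $W$ to $Z$, $\max g(W)<\min g(Z)$, and also $\max\tilde f(W)=\max f(X)<\min f(Y)=\min\tilde f(Z)$. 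Hence for an edge $wz$ with $w\in W$, $z\in Z$, both $\tilde f(z)-\tilde f(w)$ and $g(z)-g(w)$ are positive and equal, so $\delta:=\tilde f-g$ takes equal values at the two endpoints of every edge; as $H$ is connected, $\delta\equiv c$ for a constant $c\in\mathbb{Z}$. Then $g(V(H))=\tilde f(V(H))=g(V(H))+c$, and a finite set of integers cannot equal a translate of itself by a nonzero amount, so $c=0$, i.e. $\tilde f=g$, that is $f(x)=g(\varphi(x))$ for all $x\in V(G)$; the edge colors agree automatically by (C-\ref{edge-homomorphism}). Together with $G\cong H$ this is precisely $G=H$ in the sense of Remark~\ref{rem:gracefully-graph-homomorphism}(ii).

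The step I expect to be the main obstacle is the reduction of $\delta$ to a constant: it uses (C-\ref{set-ordered}) (and the bipartiteness (C-\ref{bipartite})) essentially, to force all of the edge differences $\tilde f(z)-\tilde f(w)$ and $g(z)-g(w)$ to have the same sign. This cannot simply be dropped: on $P_4=abcd$ the labellings $(1,4,2,3)$ and $(4,1,3,2)$ both satisfy (C-\ref{edge-difference})--(C-\ref{grace-color-set}) with identical vertex and edge color sets, yet are related by no automorphism of $P_4$; so the argument above settles the set-ordered case, and the non-set-ordered variant of the statement would need a further hypothesis for the final identification of colorings. Along the way one should also dispatch the small points: vertex-injectivity of $f$ (automatic when $p=q+1$, e.g.\ for trees, the principal case of interest), handling isolated vertices or disconnected pieces componentwise, and observing that the hypothesis $f(E(G))=g(E(H))$ is already forced by (C-\ref{grace-color-set}), so that the genuinely new assumption is $f(V(G))=g(V(H))$.
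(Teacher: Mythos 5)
Your proposal is correct and is considerably more rigorous than what the paper actually offers: the paper has no formal proof environment for this theorem, only the informal paragraph preceding it, which asserts that the color-set equalities make $\pi$ bijective and then jumps from $|f_1(u)-f_1(v)|=|g_1(\pi(u))-g_1(\pi(v))|$ directly to $G_1=G^*_1$. Your two-stage argument fills exactly the gaps in that sketch. For the isomorphism stage you use the counting argument (injectivity of $f$ and $g$ on edges from (C-\ref{grace-color-set}), hence injectivity and surjectivity of the induced edge map, hence bijectivity of $\varphi$ on vertices via $|f(V(G))|=|g(V(H))|$), which is essentially the intended route but made precise; your observation that vertex-injectivity of $f$ is an extra (if mild) assumption is a fair criticism of the paper's loose ``total coloring'' terminology. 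The genuinely new contribution is the second stage: the paper never explains how equality of \emph{absolute} differences yields equality of colorings, and your sign argument --- using (C-\ref{bipartite}) and (C-\ref{set-ordered}) to orient every edge difference, so that $\tilde f-g$ is constant on the connected graph $H$ and the constant is killed by $f(V(G))=g(V(H))$ --- is the missing step. Most valuably, your $P_4$ example with the dual labellings $(1,4,2,3)$ and $(4,1,3,2)$ shows that this step genuinely requires the set-ordered hypothesis: the identity map satisfies (C-\ref{edge-difference})--(C-\ref{vertex-color-set}) and (C-\ref{grace-color-set}) together with both color-set hypotheses, yet no automorphism of $P_4$ carries one labelling to the other, so the parenthetical ``(set-ordered)'' in the statement cannot be dropped and the theorem as literally stated is false in its non-set-ordered reading. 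This is a substantive correction the paper does not contain.
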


\begin{figure}[h]
\centering
\includegraphics[width=8cm]{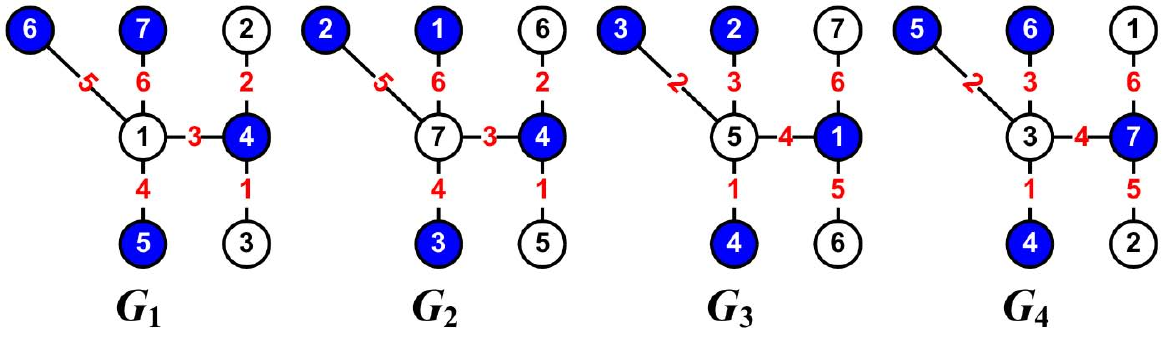}
\caption{\label{fig:gracefully-homomorphism}{\small $G_1$ and $G_2$ are a pair of mutually dual set-ordered graceful colorings, so are $G_3$ and $G_4$.}}
\end{figure}

By Definition \ref{defn:faithful} and Theorem \ref{thm:bijective-graph-homomorphism}, we have

\begin{thm}\label{thm:set-ordered-gracefully-bijective}
If a (set-ordered) gracefully graph homomorphism $\varphi: G \rightarrow H$ is faithful bijective, then $G=H$.
\end{thm}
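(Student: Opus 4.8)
The plan is to combine the two results cited just before the statement, namely Definition~\ref{defn:faithful} and Theorem~\ref{thm:bijective-graph-homomorphism}, with the uniqueness argument already rehearsed for the map $\pi:V(G_1)\to V(G^*_1)$ in the paragraph preceding Theorem~\ref{thm:parti-gracecular-bijective}. First I would observe that since $\varphi$ is faithful and bijective, Theorem~\ref{thm:bijective-graph-homomorphism} immediately gives a graph isomorphism $G\cong H$; in particular $|V(G)|=|V(H)|$, $|E(G)|=|E(H)|$, and for every edge $uv\in E(G)$ the pair $\varphi(u)\varphi(v)$ is an edge of $H$, and conversely every edge of $H$ arises this way exactly once. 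So the topological half of ``$G=H$'' in the sense of Remark~\ref{rem:gracefully-graph-homomorphism}(ii) is already secured, and what remains is to check that the total colorings agree under $\varphi$, i.e. $f(x)=g(\varphi(x))$ for vertices and $f(uv)=g(\varphi(u)\varphi(v))$ for edges.

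For the edges this is nothing new: condition (C-\ref{edge-homomorphism}) of Definition~\ref{defn:gracefully-graph-homomorphism} is exactly $f(uv)=g(\varphi(u)\varphi(v))$, so edge colors transport for free. The real work is transporting the vertex colors. Here I would use that $\varphi$ is bijective together with conditions (C-\ref{edge-difference}) and (C-\ref{edge-homomorphism}): for each edge $uv$ we get $|f(u)-f(v)|=f(uv)=g(\varphi(u)\varphi(v))=|g(\varphi(u))-g(\varphi(v))|$, so the color of each endpoint is pinned down up to a sign/shift relative to its neighbor along every edge. To convert these local constraints into the global equality $f(x)=g(\varphi(x))$ I would exploit the graceful structure: in a (set-ordered) graceful total coloring the edge labels are precisely $[1,q]$ and the maximum edge label $q$ is realized only by the pair $\{0,q\}$ (or, in the $[1,q+1]$ normalization used here, by the extreme vertex colors $1$ and $q+1$), which forces the two extreme vertex colors; then working inward, each successive edge label determines the next vertex color uniquely once the earlier ones are fixed. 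Because $\varphi$ carries edges to edges bijectively and preserves edge colors, the same inductive determination on $H$ forces $g(\varphi(x))$ to take the same value as $f(x)$ at every vertex. In the set-ordered case the hypothesis $\max f(X)<\min f(Y)$ and $\max g(W)<\min g(Z)$ removes the residual ``mirror'' ambiguity (the graceful-to-dual reflection $x\mapsto q+2-x$ illustrated by $G_1,G_2$ in Fig.~\ref{fig:gracefully-homomorphism}), so the bipartition classes are matched up in the correct order and the identification is rigid.

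I would then conclude, exactly as in the $\pi:V(G_1)\to V(G^*_1)$ discussion, that since $uv\leftrightarrow\varphi(u)\varphi(v)$ is a bijection of edge sets, $|f(u)-f(v)|=|g(\varphi(u))-g(\varphi(v))|$ on every edge, and the vertex colors agree, the colored graphs coincide: $G=H$ in the sense declared in Remark~\ref{rem:gracefully-graph-homomorphism}(ii). The main obstacle, and the step that deserves the most care, is the vertex-color transport: a priori the local equations $|f(u)-f(v)|=|g(\varphi(u))-g(\varphi(v))|$ allow $g\circ\varphi$ and $f$ to differ by the graceful reflection on one side of the bipartition, and one must invoke either the explicit range condition (C-\ref{vertex-color-set}) / (C-\ref{grace-color-set}) pinning the extreme colors, or in the set-ordered variant condition (C-\ref{set-ordered}), to kill that ambiguity; I would handle the general gracefully case by the extreme-label argument and note that the set-ordered case is then immediate. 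Everything else (cardinality equalities, edge-color preservation, the isomorphism itself) is a direct citation of Theorems~\ref{thm:bijective-graph-homomorphism} and the definitions.
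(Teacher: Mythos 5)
Your first step is exactly the paper's entire proof: the paper derives Theorem~\ref{thm:set-ordered-gracefully-bijective} solely by citing Definition~\ref{defn:faithful} and Theorem~\ref{thm:bijective-graph-homomorphism} to get $G\cong H$, and then treats the color agreement as immediate from (C-\ref{edge-difference})--(C-\ref{edge-homomorphism}), just as in the informal discussion of $\pi:V(G_1)\rightarrow V(G^*_1)$ before Theorem~\ref{thm:parti-gracecular-bijective} (``we claim that $G_1=G^*_1$''). So up to that point you coincide with the source. Where you go beyond the paper is the attempt to actually prove the vertex-color transport $f(x)=g(\varphi(x))$, and that added step is where your argument has a genuine gap.

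The extreme-label argument pins only the unordered pair of extreme colors: the edge of color $q$ has endpoint colors $\{1,q+1\}$ in both $G$ and $H$, but nothing in (C-\ref{vertex-color-set}) or (C-\ref{grace-color-set}) decides which endpoint of $\varphi(u)\varphi(v)$ receives $1$ and which receives $q+1$, so the graceful reflection is \emph{not} excluded in the plain (non-set-ordered) case. Concretely, let $G=K_{1,n}$ ($n\geq 2$) with center colored $1$ and leaves colored $2,\dots,n+1$, and let $H=K_{1,n}$ with center colored $n+1$ and leaves colored $1,\dots,n$; mapping center to center and the leaf of color $k+1$ to the leaf of color $n+1-k$ is a faithful bijective gracefully graph homomorphism preserving every edge color, yet $f(\mathrm{center})=1\neq n+1=g(\varphi(\mathrm{center}))$, and since any isomorphism must fix the center, no color-preserving identification exists at all. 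Hence your plan to ``handle the general gracefully case by the extreme-label argument'' cannot work as stated; only the set-ordered hypothesis (C-\ref{set-ordered}) together with (C-\ref{bipartite}) (which forces $W=\varphi(X)$) kills the reflection, because then $f(v)-f(u)=g(\varphi(v))-g(\varphi(u))$ holds with signs, so $g\circ\varphi-f$ is constant on each component and the edge of color $q$ forces the constant to be $0$. Note also that this last step, like your ``working inward'' induction, tacitly needs $G$ connected: for disconnected graphs the component not containing the edge of color $q$ can be shifted (e.g.\ $2K_2$), so per-edge colors plus set-orderedness do not determine the vertex colors. The paper never confronts either issue, but since you introduced the transport step, these are the points your proof must supply or the statement must be restricted (set-ordered, connected) for it to hold.
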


A connected graph $G$ admits a set-ordered gracefully total coloring $f$, so $V(G)=X\cup Y$ and each edge $uv\in E(G)$ holds $u\in X$ and $v\in Y$ true. Since the property of a set-ordered gracefully total coloring $f$ is $\max f(X)<\min f(Y)$, without loss of generality, we can have $1=f(u_1)\leq f(u_2)\leq \cdots \leq f(u_a) <f(v_1)\leq f(v_2)\leq \cdots \leq f(v_b)=q+1$, $f(u_iv_j)=f(v_j)-f(u_i)$, where $X=\{u_1,u_2,\dots, u_a\}$ and $Y=\{v_1,v_2,\dots, v_b\}$, $a+b=|V(G)|$ and $q=|E(G)|$. We come to set another total coloring $g$ of $G$ in the way: $g(u_i)=2f(u_i)$ for $u_i\in X$ and $g(v_j)=2f(v_j)-1$ for $v_j\in Y$, as well as $g(u_iv_j)=g(v_j)-g(u_i)$. Clearly, $g(E(G))=\{1,3,5,\dots ,2q+1\}$ from $f(E(G))=[1,q]$, which means that $g$ is really a set-ordered odd-gracefully total coloring of $G$. We say: \emph{A connected graph admits a set-ordered gracefully total coloring if and only if it admits a set-ordered odd-gracefully total coloring}. We can obtain the results based on odd-gracefully graph homomorphisms, like that in Theorem \ref{thm:parti-gracecular-bijective} and Theorem \ref{thm:set-ordered-gracefully-bijective}.

\section{Graph homomorphisms with lattices, every-zero graphic groups}

The authors in \cite{Wang-Yao-Star-type-lattices-2020} and \cite{Yao-Wang-Su-Sun-ITOEC2020} introduce graphic lattices, and give connections between traditional lattices and graphic lattices. In this subsection, we will introduce: (i) graph-homomorphism lattices based on graph homomorphisms; (ii) every-zero graphic group homomorphisms; and (iii) graphic-lattice homomorphisms from sets to sets.

\subsection{Graph homomorphism lattices} Graph homomorphism lattices are like graphic lattices. Let $\textrm{H}_{\textrm{om}}(H,W)$ be the set of all $W$-type totally-colored graph homomorphisms $G\rightarrow H$. For a fixed $W_k$-type graph homomorphism, suppose that there are mutually different total colorings $g_{k,1},g_{k,2},\dots, g_{k,m_k}$ of the graph $H$ to form $W_k$-type graph homomorphisms $G\rightarrow H_{k,i}$ with $i\in [1,m_k]$, where $H_{k,i}$ is a copy of $H$ and colored by a total coloring $g_{k,i}$. Thereby, we have the sets $\textrm{H}_{\textrm{om}}(H_{k,i},W_k)$ with $i\in [1,m_k]$, and $\textrm{H}_{\textrm{om}}(H_{k},W_k)=\bigcup^{m_k}_{i=1}\textrm{H}_{\textrm{om}}(H_{k,i},W_k)$. We get a \emph{$W_k$-type totally-colored graph homomorphism lattice} as follows
\begin{equation}\label{eqa:Wk-type-graph homomorphism-lattice}
{
\begin{split}
\textbf{\textrm{L}}(\textbf{\textrm{W}}_k,\textbf{\textrm{H}}^k_{\textrm{om}})=& \Biggr \{\bigcup^{m_k}_{i=1}a_i(G\rightarrow H_{k,i}):a_k\in \{0,1\};\\
&H_{k,i}\in \textrm{H}_{\textrm{om}}(H_{k},W_k)\Biggr \}
\end{split}}
\end{equation}
with $\sum^{m_k}_{i=1}a_i=1$ and the base $\textbf{\textrm{H}}^k_{\textrm{om}}=(H_{k,i})^{m_k}_{i=1}$.

For example, as a $W_k$-type totally-colored graph homomorphism is a set-ordered gracefully graph homomorphism, $G$ admits a set-ordered graceful total coloring $f$ and $H$ admits a set-ordered gracefully total coloring $g_k$ in a set-ordered gracefully graph homomorphism $G\rightarrow H_k$. Thereby, a $W_k$-type totally-colored graph homomorphism lattice may be feasible and effective in application. In real computation, finding all of mutually different set-ordered gracefully total colorings of the graph $H$ is a difficult math problem, since there is no polynomial algorithm for this problem.

Notice that $\textrm{H}_{\textrm{om}}(H,W)=\bigcup^M_{k=1}\bigcup^{m_k}_{i=1}\textrm{H}_{\textrm{om}}(H_{k,i},W_k)$, where $M$ is the number of all $W$-type totally-colored graph homomorphisms, immediately, we get a \emph{$W$-type totally-colored graph homomorphism lattice}
\begin{equation}\label{eqa:total-type-graph-homomorphism-lattice}
{
\begin{split}
\textbf{\textrm{L}}(\textbf{\textrm{W}},\textbf{\textrm{H}}_{\textrm{om}})=\bigcup ^M_{k=1}\textbf{\textrm{L}}(\textbf{\textrm{W}}_k,\textbf{\textrm{H}}^k_{\textrm{om}})
\end{split}}
\end{equation}
with the base $\textbf{\textrm{H}}_{\textrm{om}}=((H_{k,i})^{m_k}_{i=1})^M_{k=1}$.

\subsection{Every-zero graphic group homomorphisms}
Every-zero graphic groups have been introduced and discussed in \cite{Wang-Su-Sun-Yao-submitted-ITOEC2020, Yao-Zhang-Sun-Mu-Sun-Wang-Wang-Ma-Su-Yang-Yang-Zhang-2018arXiv, Yao-Mu-Sun-Sun-Zhang-Wang-Su-Zhang-Yang-Zhao-Wang-Ma-Yao-Yang-Xie2019}.
Let $F_f(G)=\{G_i:i\in [1,m]\}$ be a set of graphs, where $G_i\cong G$ with $G_1=G$, and $G$ admits a $W$-type total coloring $f$, each $G_i$ admits a $W$-type total coloring $f_i$ induced by $f$ holding $f_i(xy)\,(\textrm{mod}\,M)=f(xy)$ for $xy\in E(G)$, where $M$ is a constant. We select any $G_k$ as the \emph{zero} for the operation ``$G_i\oplus G_j$'' on the graph set $F_f(G)$, and define
\begin{equation}\label{eqa:graphic-group-definition}
f_i(x)+f_j(x)-f_k(x)=f_{\lambda}(x)
\end{equation} with $\lambda=i+j-k\,(\textrm{mod}\,M)$ for each vertex $x\in V(G)$, and $f_{\lambda}(xy)\,(\textrm{mod}\,M)=f(xy)$ for each edge $xy\in E(G)$. By the operation ``$G_i\oplus G_j$'' defined in (\ref{eqa:graphic-group-definition}), it is not hard to verify $G_i\oplus G_k=G_i$, $G_i\oplus G_j=G_j\oplus G_i$, $(G_i\oplus G_j)\oplus G_s=G_i\oplus (G_j\oplus G_s)$. So, we get an \emph{every-zero graphic group} $\{F_f(G);\oplus\}$ introduced in \cite{Yao-Zhang-Sun-Mu-Sun-Wang-Wang-Ma-Su-Yang-Yang-Zhang-2018arXiv}.

Let $\{F_h(H);\oplus\}$ be another \emph{every-zero graphic group} under the operation ``$H_i\oplus H_j$'' defined in (\ref{eqa:graphic-group-definition-00}), where the graph set $F_h(H)=\{H_i:i\in [1,m]\}$, $H_i\cong H$ with $H_1=H$, and $H$ admits a $W$-type total coloring $h$, each $H_i$ admits a $W$-type total coloring $h_i$ induced by $h$ holding $h_i(xy)\,(\textrm{mod}\,M)=h(xy)$ for $xy\in E(G)$, where $M$ is a constant. For the zero selected arbitrarily from the graph set $F_h(H)$, we have the operation ``$H_i\oplus H_j$'' defined as follows
\begin{equation}\label{eqa:graphic-group-definition-00}
h_i(w)+h_j(w)-h_k(x)=h_{\mu}(w)
\end{equation} with $\mu=i+j-k\,(\textrm{mod}\,M)$ for each vertex $w\in V(H)$, and $h_{\mu}(wz)\,(\textrm{mod}\,M)=h(wz)$ for each edge $wz\in E(H)$.

Suppose that there are graph homomorphisms $G_i\rightarrow H_i$ defined by $\theta_i:V(G_i)\rightarrow V(H_i)$ with $i\in [1,m]$. We define $\theta=\bigcup^m_{i=1}\theta_i$, and have an \emph{every-zero graphic group homomorphism} $\{F_f(G);\oplus\}\rightarrow \{F_h(H);\oplus\}$ from a set $F_f(G)$ to another set $F_h(H)$.

Two sets $F_f(G)=\{G_i:i\in [1,7]\}$ and $F_h(H)=\{H_i:i\in [1,7]\}$ shown in Fig.\ref{fig:graphic-group} distribute us seven graph homomorphisms $\theta_j:V(G_j)\rightarrow V(H_j)$ with $j\in [1,7]$. It is not hard to verify two every-zero graphic groups $F_f(G)=\{G_i:i\in [1,7]\}$ and $F_h(H)=\{H_i:i\in [1,7]\}$ by the formulae (\ref{eqa:graphic-group-definition}) and (\ref{eqa:graphic-group-definition-00}).

\begin{figure}[h]
\centering
\includegraphics[width=8cm]{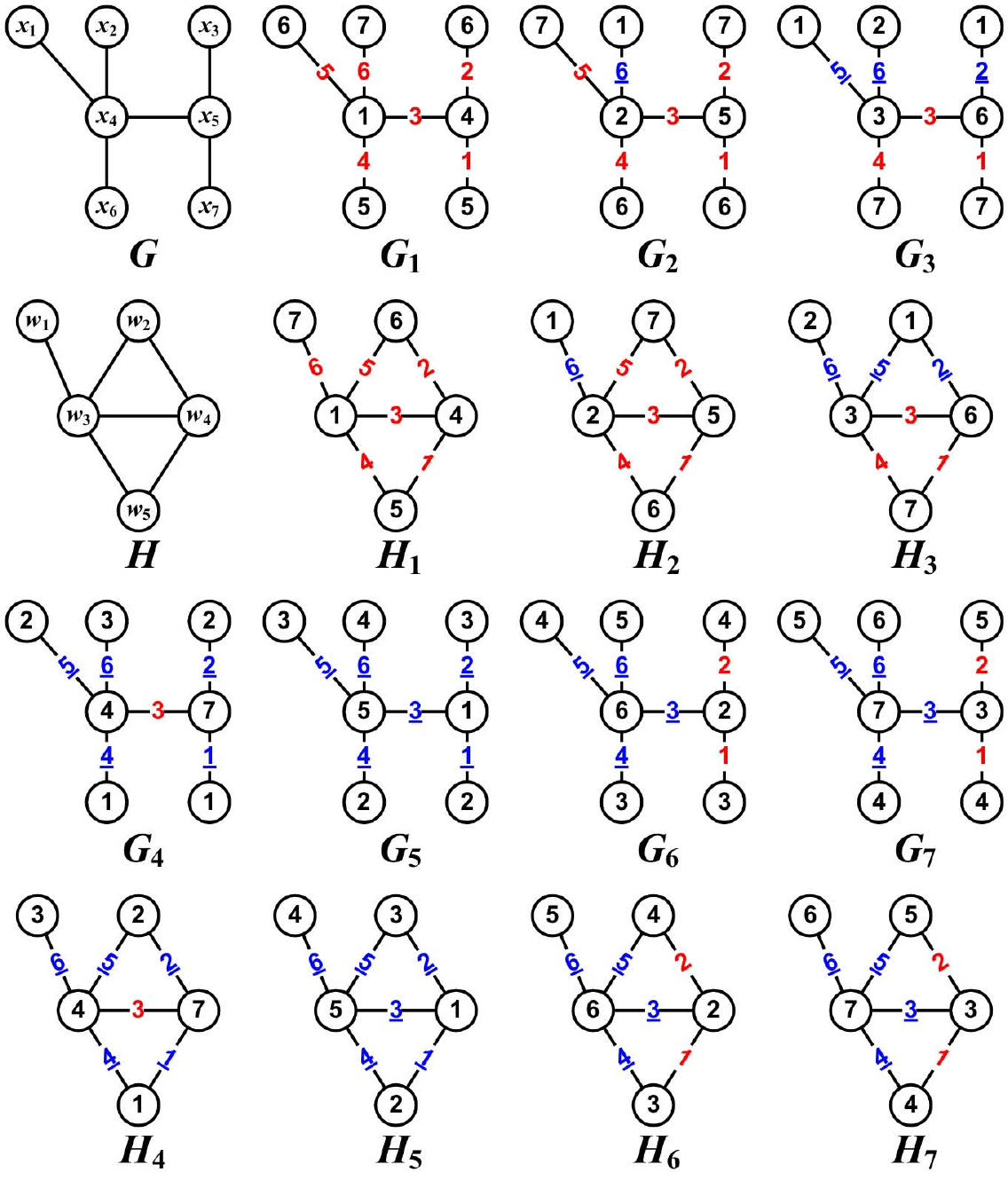}
\caption{\label{fig:graphic-group}{\small Two every-zero graphic groups $\{F_f(G);\oplus\}$ and $\{F_h(H);\oplus\}$.}}
\end{figure}

\subsection{Graphic lattice homomorphisms}
Let $\textrm{\textbf{G}}=(G_k)^m_{k=1}$ and $\textrm{\textbf{H}}=(H_k)^m_{k=1}$ be two \emph{bases}, and let $\theta_k:V(G_k)\rightarrow V(H_k)$ be a $W_k$-type totally-colored graph homomorphism with $k\in [1,m]$, and let $(\bullet)$ be a graph operation on graphs. Suppose that $F$ and $J$ are two sets of graphs, such that each graph $G\in F$ corresponds a graph $H\in J$, and there is a $W_k$-type totally-colored graph homomorphism $\theta_{G,H}:V(G)\rightarrow V(H)$. We have the following graphic lattices:
\begin{equation}\label{eqa:general-operation}
{
\begin{split}
&\textbf{\textrm{L}}(\textbf{\textrm{F}}(\bullet)\textbf{\textrm{G}})=\left \{(\bullet)^m_{i=1}a_iG_i:~a_i\in Z^0;G_i\in \textrm{\textbf{G}}\right \}\\
&\textbf{\textrm{L}}(\textbf{\textrm{J}}(\bullet)\textbf{\textrm{H}})=\left \{(\bullet)^m_{j=1}b_jH_j:~b_j\in Z^0;H_j\in \textrm{\textbf{H}}\right \}
\end{split}}
\end{equation} with $\sum^m_{i=1}a_i\geq 1$ and $\sum^m_{j=1}b_j\geq 1$. Let $\pi=(\bigcup^m_{k=1} \theta_k)\cup (\bigcup _{G\in F,H\in J}\theta_{G,H})$. We have a \emph{$W$-type graphic lattice homomorphism}
\begin{equation}\label{eqa:general-graphic lattice-homomorphism}
\pi:\textbf{\textrm{L}}(\textbf{\textrm{F}}(\bullet)\textbf{\textrm{G}})\rightarrow \textbf{\textrm{L}}(\textbf{\textrm{J}}(\bullet)\textbf{\textrm{H}}).
\end{equation}

In particular cases, we have: (1) The operation $(\bullet)=\ominus$ is an operation by joining some vertices $x_{k,i}$ of $G_i$ with some vertices $y_{k,j}$ of $G_j$ together by new edges $x_{k,i}y_{k,j}$ with $k\in [1,a_k]$ and $a_k\geq 1$, the resultant graph is denoted as $G_i\ominus G_j$, called \emph{edge-joined graph}. (2) The operation $(\bullet)=\odot$ is an operation by coinciding a vertex $u_{k,i}$ of $G_i$ with some vertex $v_{k,j}$ of $G_j$ into one vertex $u_{k,i}\odot v_{k,j}$ for $k\in [1,b_k]$ with integer $b_k\geq 1$, the resultant graph is denoted as $G_i\odot G_j$.

Thereby, we get an edge-joined graph $H_i\ominus H_j$ since $\theta_k(x_{k,i})\in V(H_i)$ and $\theta_k(y_{k,j})\in V(H_j)$, $\theta_k(x_{k,i}y_{k,j})\in E(H_i\ominus H_j)$ and a $W_k$-type totally-colored graph homomorphism $\theta_k:V(G_i\ominus G_j)\rightarrow V(H_i\ominus H_j)$. Similarly, we have another $W_k$-type totally-colored graph homomorphism $\phi_k:V(G_i\odot G_j)\rightarrow V(H_i\odot H_j)$. In totally, we have two $W_k$-type totally-colored graph homomorphisms
\begin{equation}\label{eqa:c3xxxxx}
{
\begin{split}
&\theta:\ominus^m_{i=1} V(G_i)\rightarrow \ominus^m_{i=1} V(H_i)\\
&\phi:\odot^m_{i=1} V(G_i)\rightarrow \odot^m_{i=1} V(H_i).
\end{split}}
\end{equation}

We have two graphic lattices based on the operation ``$\ominus$'':
\begin{equation}\label{eqa:graphic lattice-homomorphism-11}
{
\begin{split}
&\textbf{\textrm{L}}(\ominus\textbf{\textrm{G}})=\left \{\ominus ^m_{k=1}a_kG_k:~a_k\in Z^0;G_k\in \textrm{\textbf{G}}\right \}\\
&\textbf{\textrm{L}}(\ominus\textbf{\textrm{H}})=\left \{\ominus ^m_{k=1}b_kH_k:~b_k\in Z^0;H_k\in \textrm{\textbf{H}}\right \}
\end{split}}
\end{equation}
with $\sum^m_{k=1}a_k\geq 1$ and $\sum^m_{k=1}b_k\geq 1$.

The above works enable us to get a homomorphism $\theta:\textbf{\textrm{L}}(\ominus\textbf{\textrm{G}})\rightarrow \textbf{\textrm{L}}(\ominus\textbf{\textrm{H}})$, called \emph{$W$-type graphic lattice homomorphism}. Similarly, we have another $W$-type graphic lattice homomorphism $\pi':\textbf{\textrm{L}}(\odot\textbf{\textrm{G}})\rightarrow \textbf{\textrm{L}}(\odot\textbf{\textrm{H}})$ by the following two graphic lattices based on the operation ``$\odot$''
\begin{equation}\label{eqa:graphic lattice-homomorphism-33}
\textbf{\textrm{L}}(\odot\textbf{\textrm{G}})=\left \{\odot ^m_{k=1}c_kG_k:~c_k\in Z^0;G_k\in \textrm{\textbf{G}}\right \}
\end{equation}
with $\sum^m_{k=1}c_k\geq 1$, and
\begin{equation}\label{eqa:graphic lattice-homomorphism-44}
\textbf{\textrm{L}}(\odot\textbf{\textrm{H}})=\left \{\odot ^m_{k=1}d_kH_k:~d_k\in Z^0;H_k\in \textrm{\textbf{H}}\right \}
\end{equation}
with $\sum^m_{k=1}d_k\geq 1$. Notice that there are mixed operations of the operation ``$\ominus$'' and the operation ``$\odot$'', so we have more complex $W$-type graphic lattice homomorphisms. If two \emph{bases} $\textbf{\textrm{G}}_{\textrm{group}}=\{F_f(G);\oplus\}$ and $\textbf{\textrm{H}}_{\textrm{group}}=\{F_h(H)$; $\oplus\}$ are two every-zero graphic groups, so we have an every-zero graphic group homomorphism $\varphi:\textbf{\textrm{G}}_{\textrm{group}}\rightarrow \textbf{\textrm{H}}_{\textrm{group}}$ and two \emph{every-zero graphic group homomorphisms}:
$$\textbf{\textrm{L}}(\ominus\textbf{\textrm{G}}_{\textrm{group}})\rightarrow \textbf{\textrm{L}}(\ominus\textbf{\textrm{H}}_{\textrm{group}}),\textbf{\textrm{L}}(\odot\textbf{\textrm{G}}_{\textrm{group}})\rightarrow \textbf{\textrm{L}}(\odot\textbf{\textrm{H}}_{\textrm{group}}).$$

\subsection{Authentications based on various graph homomorphisms}

In Fig.\ref{fig:example-00}, if we select a Topsnut-gpw (a) as a \emph{public key} $G_{(\textrm{a})}$ in Fig.\ref{fig:example-00}, then we have at least five Topsnut-gpws $G_{(k)}$ with $k=$b,c,d,e,f, as \emph{private keys}, to form five set-ordered gracefully graph homomorphisms $G_{(k)}\rightarrow G_{(\textrm{a})}$. In the topological structure of view, $G_{(ii)}\not\cong G_{(jj)}$ for $i,j=$b,c,d,e,f and $i\neq j$. We, by these six Topsnut-gpws, have a Topcode-matrix (Ref. \cite{Sun-Zhang-Zhao-Yao-2017, Yao-Sun-Zhao-Li-Yan-2017, Yao-Zhang-Sun-Mu-Sun-Wang-Wang-Ma-Su-Yang-Yang-Zhang-2018arXiv})
\begin{equation}\label{eqa:Topcode-matrix-vs-6-Topsnut-gpws}
\centering
{
\begin{split}
T_{code}= \left(
\begin{array}{ccccccccccc}
6&5&6&6&6&1&1&1&1&1\\
1&2&3&4&5&6&7&8&9&10\\
7&7&9&10&11&7&8&9&10&11
\end{array}
\right)
\end{split}}
\end{equation} So, each of these six Topsnut-gpws corresponds the Topcode-matrix $T_{code}$. Moreover, the Topcode-matrix $T_{code}$ can distribute us $30!$ \emph{number strings} $S_{k}$ with $k\in [1,30!]$ like the following number string
$$S_{1}=617725639104665117611678711891089111011$$
with $39$ bytes. For the reason of authentications, we have to solve a problem, called Number String Decomposition and Graph Homomorphism Problem (NSD-GHP), as follows:

\vskip 0.2cm

\textbf{NSD-GHP:}

Given a number string $S_{1}=c_1c_2\cdots c_m$ with $c_i\in [0,9]$, decompose it into $30$ segments $c_1c_2\cdots c_m=a_1a_2\cdots a_{30}$ with $a_j=c_{n_j}c_{n_j+1}\cdots c_{n_{j+1}}$ with $j\in [1,29]$, $n_1=1$ and $n_{30}=m$. And use $a_k$ with $k\in [1,30]$ to reform the Topcode-matrix $T_{code}$ in (\ref{eqa:Topcode-matrix-vs-6-Topsnut-gpws}), and moreover reconstruct all Topsnut-gpws (like six Topsnut-gpws corresponds shown in Fig.\ref{fig:example-00}). By the found Topsnut-gpws corresponding the common Topcode-matrix $T_{code}$, find the public Topsnut-gpws $H_i$ and the private Topsnut-gpws $G_i$ as we desired, such that each mapping $\varphi_i:V(G_i)\rightarrow V(H_i)$ forms a graph homomorphism $G_i\rightarrow H_i$ with $i\in [1,a]$.

\vskip 0.2cm

\textbf{The complexity of NSD-GHP:}
\begin{asparaenum}[\textrm{Comp}-1. ]
\item Since number strings are not integers, the well-known integer decomposition techniques can not be used to solve the number string decomposition problem.
\item No polynomial algorithm for cutting a number string $S=c_1c_2\cdots c_m$ with $c_i\in [0,9]$ and $S$ is not encrypted into $a_1a_2\cdots a_{3q}$, such that all $a_i$ to be correctly the elements of some matrix. As known, there are several kinds of matrices related with graphs, for example, graph adjacency matrix, Topsnut-matrix, Topcode-matrix and Hanzi-matrix, and so on.
\item If the matrix in problem has been found, it is difficult to guess the desired graphs, since it will meet NP-hard problems, such as, Graph Isomorphic Problem, and Hanzi-graph Decomposition Problem (Ref. \cite{Yao-Mu-Sun-Sun-Zhang-Wang-Su-Zhang-Yang-Zhao-Wang-Ma-Yao-Yang-Xie2019}).
\item If the desired graphs have been determined, we will face a large number of graph colorings and graph labellings for coloring exactly the desired graphs, as well as unknown problems of graph theory, such as, Graph Total Coloring Problem, Graceful Tree Conjecture.
\end{asparaenum}

\section{Conclusion}

We have defined several kinds of $W$-type totally-colored graph homomorphisms by combining graph homomorphisms and particular graph total colorings together. As known the number of particular graph total colorings is not fixed everyday, so the $W$-type totally-colored graph homomorphism is not fixed. We have constructed two kinds of $W$-type graphic lattice homomorphisms by graphic lattices based on two operations ``$\ominus$'' and ``$\odot$'', and every-zero graphic group homomorphisms from sets to sets.

Naturally, new kinds of graph homomorphisms contain new mathematical questions, such as, NSD-GHP. Our graph homomorphisms can be defined by other graph colorings/labellings not mentioned here, and our works here are just beginning of studying graph homomorphisms. It is interesting to research deeply various graph homomorphisms for network security in the ear of quantum computers.

\section*{Acknowledgment}

The author, \emph{Bing Yao}, was supported by the National Natural Science Foundation of China under grant No. 61363060 and No. 61662066. The author, \emph{Hongyu Wang}, thanks gratefully the National Natural Science Foundation of China under grants No. 61902005, and China Postdoctoral Science Foundation Grants No. 2019T120020 and No. 2018M641087.

\end{document}